\documentclass{amsart}

\usepackage{amsmath}
\usepackage{amssymb}
\usepackage{nccmath}
\usepackage{graphicx}
\usepackage{amsfonts}
\usepackage{hyperref}
\usepackage[numbers]{natbib}
\usepackage{lipsum}
\usepackage{mathrsfs}
\usepackage{epstopdf}
\usepackage{multirow}
\usepackage{enumitem}
\usepackage[utf8]{inputenc}
\usepackage{mathtools} 
\usepackage{accents}
\usepackage{commath}
\numberwithin{equation}{section}
\usepackage{amsthm}
\newtheorem{thm}{Theorem}
\numberwithin{thm}{section}
\newtheorem{lem}[thm]{Lemma}

\newtheorem{prop}[thm]{Proposition}

\newtheorem{definition}[thm]{Definition}

\usepackage{amscd,amstext}
\usepackage{comment}
\usepackage{tikz}
\usetikzlibrary{calc}
\usepackage{caption}  
\theoremstyle{definition}
\newtheorem{proofpart}{Step}
\usepackage{verbatim}
\usepackage{nccmath}
\newtheorem*{structhyp1}{Structural Hypothesis (SH)}
\usepackage{url}
\usepackage{doi}

\def\R{\mathbb{R}}

\def\D{\mathrm{D}}
\def\Om{\Omega}

\def\p{\mathrm{p}}

\newcommand{\mC}{\mathcal{C}}

\newcommand{\mH}{\mathcal{H}}

\newcommand{\mD}{\mathcal{D}}

\newcommand{\noi}{\noindent}

\newcommand{\al}{\alpha}
\newcommand{\be}{\beta}

\newcommand{\de}{\delta}

\newcommand{\e}{\varepsilon}

\newcommand{\Si}{\Sigma}
\newcommand{\la}{\lambda}

\newcommand{\weak }{\, -\!\!\!\!\!-\!\!\!\!\rightharpoonup}

\newcommand{\larrow}{\longrightarrow}
\newcommand{\ot}{\otimes}

\newcommand{\LL}{\text{\LARGE$\llcorner$}}

\renewcommand{\p}{\partial}

\newcommand{\sub}{\subseteq}

\newcommand{\by}{\times}

\newcommand{\beq}{\begin{equation}}

\newcommand{\eeq}{\end{equation}}

\makeatletter
\@namedef{subjclassname@2020}{\textup{2020} Mathematics Subject Classification}
\makeatother

\begin{document}

\title{On General Linear Degenerate Elliptic PDE Systems}

\date{}

\author{Nikos Katzourakis and Frederick Temple}

\subjclass[2020]{Primary 35J47; 35J70; Secondary 35DXX.}

\keywords{Calculus of Variations, Distributions, Degenerate elliptic second-order PDE systems, Euler-Lagrange equations, Generalised solutions, Rank-one convexity.}

\address{N.K. (corresponding author) Department of Mathematics and Statistics, University of Reading, Whiteknights Campus, Pepper Lane, Reading RG6 6AX, UNITED KINGDOM}

\address{F.T. Department of Mathematics and Statistics, University of Reading, Whiteknights Campus, Pepper Lane, Reading RG6 6AX, UNITED KINGDOM}

\thanks{N.K.\ has been partially financially supported through the EPSRC grant EP/X017206/1.}

\maketitle

\setcounter{section}{0}
\begin{abstract} 
\noindent
Let $\Omega \Subset \mathbb{R}^{n}$ be a strictly convex bounded domain. Suppose $\mathbf{A} : \mathbb{R}^{Nn} \longrightarrow \mathbb{R}^{Nn}$, $\mathbf{B}: \mathbb{R}^{Nn} \longrightarrow \mathbb{R}^{N}$, $\mathbf{C}: \mathbb{R}^{N} \longrightarrow \mathbb{R}^{N}$ are linear maps, where $\mathbf{A}$ is symmetric and non-negative definite. Given $f \in L^2(\Omega, \mathbb{R}^N)$, we consider the problem of existence of solutions $u: \Omega  \longrightarrow \mathbb{R}^N$ to the PDE system
\[
 \left\{
\begin{array}{rl}
\displaystyle\sum_{\beta = 1}^{N}\sum_{i, j = 1}^{n} \mathbf{A}_{\alpha i \beta j}\mathrm{D}_{ij}^{2}u_{\beta} + \sum_{\beta = 1}^{N} \sum_{i=1}^{n} \mathbf{B}_{\alpha \beta i}\mathrm{D}_{i}u_{\beta} + \sum_{\beta = 1}^{N} \mathbf{C}_{\alpha \beta}u_{\beta} = f_\alpha, &\text{ in $\Omega$},
\\
u = 0,\ \,& \text{ on $\partial \Omega$}.
\end{array} 
\right. 
\]
This is a linear \textit{degenerate elliptic} system, and it has not been considered before without the assumption of strict rank-one convexity. In general, it may not possess not even distributional solutions. By introducing some natural structural assumptions, we prove the existence of an appropriately defined unique generalised solution $u\in L^2(\Omega, \mathbb{R}^N)$, satisfying additional partial regularity properties. This paper extends earlier work of the first appearing author [\textit{N. Katzourakis, On linear degenerate elliptic PDE systems with constant coefficients}, Adv.\ in Calc.Var.\ 9:3, 283-291 (2016)] to include lower-order terms. 
\end{abstract}

\section{Introduction and the Main Result}

\noindent
Let $n, N \geq 1$ be integers, and suppose that $\mathbf{A} : \mathbb{R}^{Nn} \longrightarrow \mathbb{R}^{Nn}$, $\mathbf{B}: \mathbb{R}^{Nn} \longrightarrow \mathbb{R}^{N}$, $\mathbf{C}: \mathbb{R}^{N} \longrightarrow \mathbb{R}^{N}$ are linear mappings. Let us further assume $\mathbf{A}$ defines a convex symmetric quadratic form on $\mathbb{R}^{Nn}$, i.e.,
\begin{equation} \label{eq:0}
\mathbf{A}_{\alpha i \beta j} = \mathbf{A}_{\beta j \alpha i}, \hspace{0.3cm} \sum_{\alpha, \beta, i, j} \mathbf{A}_{\alpha i \beta j} Q_{\alpha i} Q_{\beta j} \geq 0, \hspace{0.3cm} \forall\ Q \in \mathbb{R}^{Nn}.
\end{equation}
Throughout this paper, we will adhere to the convention that Greek indices $\alpha, \beta, \gamma$,  $\ldots$ run in $\{1, \dotsc, N \}$ and Latin indices $i, j, k, \ldots$ run in $\{1, \dotsc, n \}$, even if their range is not denoted explicitly. The symbols $\{e^i \}$, $\{e^\alpha \}$, and $\{e^\alpha \otimes e^i \}$ will represent the respective standard bases of $\mathbb{R}^n$, $\mathbb{R}^N$ and $\mathbb{R}^{Nn}$.  Let also $\Omega \Subset \mathbb{R}^N$ be a bounded strictly convex domain, henceforth fixed. Strict convexity herein is understood in the sense that any (affine) tangent space $x+T_x\p\Omega$ of the boundary $\p\Omega$ intersect the boundary only at $x \in \p\Omega$. We are interested in the problem of existence and uniqueness of solutions $u : \Omega \longrightarrow \mathbb{R}^N$ to the Dirichlet problem for the following linear PDE system
\begin{equation} \label{eq:1}
 \left\{
\begin{array}{rl}
\displaystyle\sum_{\beta = 1}^{N}\sum_{i, j = 1}^{n} \mathbf{A}_{\alpha i \beta j}\mathrm{D}_{ij}^{2}u_{\beta} + \sum_{\beta = 1}^{N} \sum_{i=1}^{n} \mathbf{B}_{\alpha \beta i}\mathrm{D}_{i}u_{\beta} + \sum_{\beta = 1}^{N} \mathbf{C}_{\alpha \beta}u_{\beta} = f_\alpha, &\text{ in $\Omega$}, 
\\
u = 0,\ \,& \text{ on $\partial \Omega$},
\end{array} 
\right. 
\end{equation}
when $f \in L^2(\Omega, \mathbb{R}^N)$ is also given. Despite the system being linear with constant coefficients, it is {\it degenerate elliptic}, namely we do {\it not} assume either that $ \mathbf{A}$ is a strictly convex quadratic form on $\mathbb R^{N\times n}$, or even that it is strictly rank-one convex, which would amount to requiring
\[
 \sum_{\alpha, \beta, i, j} \mathbf{A}_{\alpha i \beta j} \eta_{\alpha} a_i \eta_{\beta} a_j > 0, \hspace{0.3cm} \forall\ \eta \in \mathbb{R}^{N}\setminus \{0\},\ \forall\ a \in \mathbb{R}^{n}\setminus \{0\}.
\]
In the absence of strict ellipticity, the system may not in general possess a solution in any reasonable sense, unless certain compatibility conditions are satisfied. For example, the next $2\times n$ Poisson-type system
\[
\left\{ 
\begin{array}{rl}
\Delta u_1 =f_1 , & \text{ in }\Omega,
\\
0 =f_2 , & \text{ in }\Omega,
\\
u=0, \ & \text{ on }\partial \Omega,
\end{array}
\right.
\]
arising via the admissible choices $\mathbf{A}_{\alpha i\beta j}=\delta_{\alpha 1}\delta_{1 \beta} \delta_{ij}$, $\mathbf{B}\equiv 0$ and $\mathbf{C}\equiv 0$,  has no solution whatsoever for those $f \in L^2(\Omega, \mathbb{R}^2)$ for which $f_2\not\equiv 0$. Therefore, we need to develop a theory taking into account the degeneracies of the leading coefficient $\mathbf{A}$, with appropriate compatibility conditions on all the coefficients. To this aim, we define the following vector spaces

\begin{equation} \label{eq:01}
\left\{ \ \ \begin{split}
\Pi & \coloneqq \mathrm N\big(\mathbf{A} :\mathbb{R}^{Nn} \longrightarrow \mathbb{R}^{Nn}\big)^{\perp} \hspace{67pt} \subseteq \mathbb{R}^{Nn},
\\
\Sigma &\coloneqq \text{span} [\Big\{\eta \in \mathbb{R}^N \ \big| \ \exists \, a \in \mathbb{R}^n : \eta \otimes a \in \Pi \Big\}] \subseteq \mathbb{R}^{N}.
\end{split}
\right.
\end{equation}
Here $\Pi$ is the orthogonal complement of the nullspace of $\mathbf{A}$, and $\Sigma$ is the linear span of the projections of rank-one directions onto the physical space $\mathbb R^N$, wherein the solutions are valued, of the set of all rank-one directions contained in $\Pi$. The ``size" of $\Pi$ determines the degree of degeneracy of the PDE system: for any $\eta \otimes a \in \Pi$, rank-one convexity is true along this direction, whilst $\Pi = \mathbb{R}^{Nn}$ if and only if $\mathbf{A}$ defines a strictly convex quadratic form (and then $\Sigma = \mathbb{R}^N$).  The larger the space $\Pi$ is, the larger the set of non-degenerate (rank-one) directions it contains is, along which rank-one convexity succeeds.

In order to adapt our functional spaces to the degeneracies of the system, spaces of mappings $f : \Omega \longrightarrow \Sigma \subseteq \mathbb{R}^N$, will be denoted by $C^\infty(\Omega, \Sigma)$, $L^p(\Omega, \Sigma)$, $W^{1, p}(\Omega, \Sigma)$ etc.  We will also use the convention that the same letters $\Pi, \Sigma$ will represent their respective subspaces as well as the orthogonal projections onto them. Let 
\begin{equation*}
\mathscr{D}(\Omega, \Sigma) \coloneqq C^\infty_c(\Omega, \Sigma)
\end{equation*}
be the space of test functions valued in $\Sigma$; we will denote the dual space as 
 \begin{equation*}
\mathscr{D}'(\Omega, \Sigma) \coloneqq (\mathscr{D}(\Omega, \Sigma))^*.
\end{equation*}
Both $\mathscr{D}$ and $\mathscr{D}'$ are considered to have their respective standard topologies (as e.g.\ defined in \cite{folland1} in the Euclidean-valued case), which however will not be explicitly required to be discussed. The following is the central notion of generalised solution we will utilise in this work.

\begin{definition}[Adapted distributional solutions]
A mapping $u \in \mathscr{D}'(\Omega, \Sigma)$ will be called an \emph{adapted distributional solution} to the PDE system
\begin{equation*}
\sum_{\beta, i, j} \mathbf{A}_{\alpha i \beta j}\mathrm{D}_{ij}^{2}u_{\beta} + \sum_{\beta, i} \mathbf{B}_{\alpha \beta i}\mathrm{D}_{i}u_{\beta} + \sum_{\beta} \mathbf{C}_{\alpha \beta}u_{\beta} = f_{\alpha} \hspace{0.1cm} \text{ in $\Omega$},
\end{equation*}
when $u \in L^1_{loc}(\Omega, \mathbb{R}^N)$ and, for all $\phi \in \mathscr{D}(\Omega, \Sigma)$, we have
\[
\begin{split}
\int_{\Omega} \sum_{\alpha, \beta, i, j} \mathbf{A}_{\alpha i \beta j} u_{\beta}  \mathrm{D}^2_{ij}\phi_{\alpha} &- \int_{\Omega} \sum_{\alpha, \beta, i} \mathbf{B}_{\alpha \beta i} u_{\beta}\mathrm{D}_{i}\phi_{\alpha} 
+  \int_{\Omega} \sum_{\alpha, \beta} \mathbf{C}_{\alpha \beta}u_{\beta}\phi_{\alpha} = \int_{\Omega} \sum_{\gamma} f_{\gamma} \phi_{\gamma}.
\end{split}
\] 
\end{definition}

Unlike the elliptic case, whereat it can be shown through a priori estimates that any solution is actually strong and in fact weakly twice differentiable a.e.\ on $\Omega$ (see e.g.\ \cite{GM}), in general the solution of the problem at hand cannot be expected to be a Sobolev function, and it may not even once weakly differentiable. To see this, let $\Om=\mathbb B_1\sub \R^2$ be the open unit disc centred at the origin, and choose any function $f\in C(\overline{\mathbb B_1})$ which is non-differentiable weakly with respect to $x_1$, for a.e.\ $x_2$, when $x=(x_1,x_2) \in \mathbb B_1$. Then, one can easily see that the Dirichlet problem
\[
\left\{
\begin{split}
D^2_{22}u \, &=\, f , \ \text{ in }\mathbb B_1,\\
u\,&=\, 0, \ \text{ on }\p\mathbb B_1,
\end{split}
\right.
\]
has the explicit solution
\[
u(x_1,x_2)\, = \int_{-\infty}^{x_2}\int_{-\infty}^{t_2}f(x_1,s_2)\,ds_2\,dt_2 \, -\, h(x_1,x_2),
\]
where $h : \overline{\mathbb B_1} \larrow \R $ and $g : \partial  {\mathbb B_1} \larrow \R$ are given by
\[
\left\{\ \ 
\begin{split}
h(x_1,x_2)\, :=\, &\left(\frac{g\big(x_1, \sqrt{1-x_1^2} \big) - g\big(x_1, -\sqrt{1-x_1^2} \big) } {2\sqrt{1-x_1^2}} \right)x_2\\
&\ \ \ \ \ \ \ \ +\, \frac{g\big(x_1, \sqrt{1-x_1^2} \big) + g\big(x_1, -\sqrt{1-x_1^2} \big) } {2} ,
\\
g(x_1,x_2)\, :=\, &\int_{-\infty}^{x_2}\int_{-\infty}^{t_2}f(x_1,s_2)\,ds_2\,dt_2.
\end{split}
\right.
\]
but due to the regularity properties of $f$, it follows that $u\not\in W^{1,1}_\text{loc}(\Om)$.

As is well known, even in the scalar-valued strictly elliptic case, certain conditions are required to be satisfied by the coefficients of the PDE for the corresponding Dirichlet problem to be solvable (most notably that $\mathbf{C}\leq0$). In our generalised vectorial context, we will need to utilise the following assumptions regarding the tensor coefficients $\mathbf{B}: \mathbb{R}^{Nn} \longrightarrow \mathbb{R}^{N}$ and $\mathbf{C}: \mathbb{R}^{N} \longrightarrow \mathbb{R}^{N}$, which take into account the degenerate structure of $\mathbf{A}$, in order to guarantee solvability (recall that $\Pi,\Si$ are given by \eqref{eq:01}):

\begin{equation} \label{as:2}
\mathbf{C} = \Sigma \mathbf{C} \Sigma= \mathbf{C} \Sigma= \Sigma \mathbf{C}, \ \ \ \ \mathbf{C} = \mathbf{C}^\top \! \leq 0,
\end{equation}
\begin{equation} \label{as:3}
\mathbf{B} = \Sigma \mathbf{B} \Pi = \Sigma \mathbf{B} = \mathbf{B} \Pi, \ \ \ \ \mathbf{B}_{\alpha \beta i} = \mathbf{B}_{\beta \alpha i}.
\end{equation}
These assumptions require commutativity of the operators $ \mathbf{B},  \mathbf{C}$ with the respective projections on the subspaces $\Pi \sub \R^{N\by n}$ and $\Si\sub \R^N$, as defined by  \eqref{eq:01}, together with symmetry in the $\al \leftrightarrow \be$ indices, alongside non-positivity for the matrix $\mathbf{C}$. In the case that $\mathbf{A}$ is strictly elliptic without degeneracies, it follows that $\Pi = \R^{N\by n}$ and $\Si = \R^N$, hence both projections are merely the identity operators on the corresponding spaces, and no commutativity constraint is imposed. The symmetry condition on $\mathbf{B}$ is perhaps a little more mysterious, but it is easy to find example of such tensors satisfying it; for example, any $\mathbf{B}$ of the form 
\begin{equation} \label{examb:1}
\mathbf{B} = \eta \otimes \eta \otimes a,
\end{equation}
with $\eta \otimes a \in \Pi$, satisfies assumption \eqref{as:3}. In fact, if $k \in \{1, \dotsc, m\}$, the linear combination of rank-one tensors of the form 
\begin{equation} \label{examb:2}
\mathbf{B} = \sum^m_{k=1}\lambda^{(k)} \eta^{(k)} \otimes \eta^{(k)} \otimes a^{(k)}
\end{equation}
with $\{\eta^{(k)} \otimes a^{(k)}\}^m_{k=1} \sub \Pi$ and $\{\lambda^{(k)}\}^m_{k=1}\sub \R$, also satisfies assumptions \eqref{as:3}.

Additionally to the necessity for an adapted notion of generalised solution that respects the degenerate nature of the problem, a significant difficulty in the study of this Dirichlet problem is the satisfaction of the boundary condition. In this non-elliptic ultra-low regularity context, in general there is no trace operator that allows to give a rigorous meaning to ``$u=0$ on $\p\Omega$", as $u\not\in W^{1,1}(\Omega,\R^N)$. Interestingly, however, there does exist an adapted trace operator for the solution, as long as $\Pi$ is spanned by rank-one directions. In fact, the satisfaction of the boundary condition is the {\it only} point that the strict convexity of the domain is needed. Furthermore, there also exists a corresponding adapted Poincar\'e inequality, and along any rank-one non-degenerate direction, we also have a corresponding partial regularity property. These last two auxiliary results have been proved in \cite{nikos1}, and are recalled in the next section. They are motivated by developments in the theory of generalised solutions to systems of fully non-linear PDE system and the Calculus of Variations in the space $L^\infty$ (\cite{K1, K2, K4}), and the same is actually true for the main result herein.

\medskip

The following is the main result of this paper:

\begin{thm}[Existence, uniqueness and partial regularity] \label{thm:1}
Let $\Omega \Subset \mathbb{R}^{n}$ be a strictly convex bounded domain. Suppose that  $\mathbf{A} : \mathbb{R}^{Nn} \longrightarrow \mathbb{R}^{Nn}$ is a symmetric linear map satisfying \eqref{eq:0}. Let $\Pi,\Si$ be given by \eqref{eq:01}, and assume that the vector space $\Pi \subseteq \mathbb{R}^{Nn}$ (the orthogonal complement of the nullspace of $\mathbf{A}$) is spanned by rank-one directions. Suppose further that $\mathbf{B}: \mathbb{R}^{Nn} \longrightarrow \mathbb{R}^{N}$ and $\mathbf{C}: \mathbb{R}^{N} \longrightarrow \mathbb{R}^{N}$ are linear maps satisfying assumptions \eqref{as:2}-\eqref{as:3}. 

Then, for any $f \in L^2(\Omega, \Sigma)$, the PDE system
\[ 
\left\{
\begin{array}{rl} 
			\hfil \displaystyle\sum_{\beta, i, j} \mathbf{A}_{\alpha i \beta j}\mathrm{D}_{ij}^{2}u_{\beta} + \sum_{\beta, i} \mathbf{B}_{\alpha \beta i}\mathrm{D}_{i}u_{\beta} + \sum_{\beta} \mathbf{C}_{\alpha \beta}u_{\beta} = f_{\alpha}, &\text{in $\Omega$}, 
			\\
            u = 0,\ \,  & \text{on $\partial \Omega$,}
\end{array}
\right.
\]
has a unique adapted distributional solution $u \in\mD'(\Om,\Si) \cap L^2(\Omega, \Sigma)$.  Additionally, $u$ is $\mH^{n-1}\LL \p \mC$-measurable with respect to the $(n-1)$-dimensional Hausdorff measure on the boundary of any strictly convex open subdomain $\mC \sub \Om$. Therefore, the boundary condition is satisfied in the strong sense: $u=0$ $\mH^{n-1}$-a.e.\ on $\p\Om$. 

Finally, we have the following partial regularity property: projections of the distributional gradient $\D u$ along any rank-one directions of non-degeneracy inside $\Pi$ exist as $L^2$-functions: 
\[
\D u : \eta\ot a = \D_a(\eta \cdot u) \in L^2(\Om), \ \ \ \forall\ \eta \ot a \in \Pi.
\]
\end{thm}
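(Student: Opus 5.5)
The plan is a duality argument in Hilbert space, following the scheme of \cite{nikos1}: first solve the \emph{formal adjoint} problem variationally in a space adapted to the degeneracy, then read off the adapted distributional solution. Set $\mathcal{L}u := \mathbf{A}:\D^2u + \mathbf{B}:\D u + \mathbf{C}u$. Because of the symmetries in \eqref{eq:0}, \eqref{as:2} and \eqref{as:3}, the operator acting on test functions in the Definition, $\phi\mapsto \mathbf{A}:\D^2\phi - \mathbf{B}:\D\phi + \mathbf{C}\phi$, is the formal adjoint $\mathcal{L}^*$.

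\textbf{Step 1: the adapted space.} Let $\mathcal{W}^{2,2}_0(\Om,\Si)$ be the completion of $\mD(\Om,\Si)$ under
\[
\|\phi\|^2 := \int_\Om \big|\mathbf{A}:\D^2\phi\big|^2 ,
\]
or an equivalent Hessian seminorm restricted to $\Pi$, namely $\|\Pi\D^2\phi\|_{L^2}$. I will use the adapted Poincaré inequality from \cite{nikos1}, which needs $\Pi$ to be spanned by rank-one directions. It gives
\[
\|\phi\|_{L^2}+\sum_{\eta\ot a\in\Pi}\|\D_a(\eta\cdot\phi)\|_{L^2}\le C\|\Pi\D^2\phi\|_{L^2}
\]
for $\Si$-valued $\phi$. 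Hence this is a norm, and the space is Hilbert.

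\textbf{Step 2: coercivity of the adjoint form.} I will show that
\[
\int_\Om \mathcal{L}^*\phi\cdot\mathbf{A}:\D^2\phi \ \ge\ c\int_\Om|\mathbf{A}:\D^2\phi|^2
\]
up to lower-order terms that can be absorbed. This is the step I expect to be the main obstacle. The cross terms must be handled by integration by parts, using the hypotheses as follows.
\begin{itemize}
\item The term with $\mathbf{B}$ is $\int \mathbf{B}:\D\phi\cdot \mathbf{A}:\D^2\phi$. The symmetry $\mathbf{B}_{\al\be i}=\mathbf{B}_{\be\al i}$ together with $\mathbf{B}=\Si\mathbf{B}\Pi$ should make it an exact divergence, $\tfrac12\int \D_i(\dots)$, so that it vanishes for compactly supported $\phi$.
\item The term with $\mathbf{C}$ becomes $\int \mathbf{C}\phi\cdot\mathbf{A}:\D^2\phi = -\int \mathbf{A}:(\D(\mathbf{C}\phi)\ot\D\phi)$, which is $\ge 0$ because $\mathbf{C}\le0$ commutes with $\Si$.
\end{itemize}
If an exact cancellation fails, I would instead pair with a multiplier $\psi$, use the Lax--Milgram bilinear form $b(\phi,\psi)=\int\mathcal{L}^*\phi\cdot\mathcal{L}^*\psi$, and obtain coercivity $\|\mathcal{L}^*\phi\|_{L^2}\ge c\|\phi\|$ from the previous computation via Cauchy--Schwarz.

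\textbf{Step 3: existence and uniqueness.} Given the estimate $\|\mathcal{L}^*\phi\|_{L^2}\ge c\|\phi\|_{\mathcal W}$, the functional $\phi\mapsto\int f\cdot\phi$ is bounded on $\mathcal W$ by Poincaré. Lax--Milgram applied to $b$ then gives $w\in\mathcal W$ with $\int \mathcal{L}^*w\cdot\mathcal{L}^*\phi=\int f\cdot\phi$ for all $\phi\in\mathcal W$. Setting $u:=\Si\,\mathcal{L}^*w\in L^2(\Om,\Si)$ yields exactly the identity of the Definition.

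For uniqueness, suppose $u\in L^2(\Om,\Si)$ satisfies $\int u\cdot\mathcal{L}^*\phi=0$ for all $\phi$. Then $u$ is orthogonal to $\mathcal{L}^*(\mathcal W)$, and I need to identify it as zero. Here the boundary condition enters: $u$ is meant to lie in the closure of $\mathcal{L}^*(\mathcal W)$. So I will define solutions as elements of that range, or show the range is dense in the admissible class, which is what encodes $u=0$ on $\p\Om$.

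\textbf{Step 4: regularity and trace.} The partial regularity $\D_a(\eta\cdot u)\in L^2$ for $\eta\ot a\in\Pi$ follows from $u=\mathcal{L}^*w$ combined with the rank-one Poincaré/regularity result of \cite{nikos1}, applied to the line integrals along $a$. The $\mH^{n-1}\LL\p\mC$-measurability and the statement ``$u=0$ on $\p\Om$'' come from the adapted trace theorem of \cite{nikos1}. Along each non-degenerate direction $a$, $\eta\cdot u$ is $W^{1,2}$ on a.e.\ line. Strict convexity makes those lines hit $\p\mC$ transversally at single points, so restrictions are well defined. Approximation by $\mathcal{L}^*$ of smooth compactly supported functions then gives a zero trace on $\p\Om$.
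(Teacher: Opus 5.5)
Your duality route differs from the paper's, which obtains $u$ as the weak limit of $\Si u^\e$, with $u^\e\in W^{1,2}_0$ solving the viscous systems \eqref{eq:1s} via Lax--Milgram. As set up, however, your route cannot prove the theorem, because testing only against $\mD(\Om,\Si)=C^\infty_c(\Om,\Si)$ never sees the Dirichlet condition. Whatever coercivity you prove, the output $u=\mathcal{L}^*w$ lies in the $L^2$-closure of $\mathcal{L}^*(\mD(\Om,\Si))$. That closure is the orthogonal complement of the space of $L^2(\Om,\Si)$ adapted distributional solutions of $\mathcal{L}v=0$. So $u$ is the minimal-norm interior solution, and the boundary conditions built into $\mathcal{W}$ fall on $w$, not on $u$. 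Take $n=N=1$, $\Om=(0,1)$, $\mathbf{A}=1$, $\mathbf{B}=\mathbf{C}=0$, $f=1$. Then $\mathcal{W}=W^{2,2}_0(0,1)$, $w=x^2(1-x)^2/24$, and $u=w''=\frac{1}{12}+\frac{x(x-1)}{2}$, so $u(0)=u(1)=\frac{1}{12}\neq0$. Conversely, the true solution $x(x-1)/2$ is not in the closure of the range, since it is not orthogonal to constants. Consequences:
\begin{itemize}
\item Neither of your uniqueness options works: restricting to the range picks the wrong function, and the range is not dense in the Dirichlet class.
\item The trace claim at the end of Step 4 is false. This $u$ is an $L^2$-limit of $\phi_k''$ with $\phi_k\in C^\infty_c$, yet its boundary values are nonzero; $L^2$ convergence carries no trace information.
\item Nothing in the construction controls $\Pi\D u$. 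You only know $\mathbf{A}:\D^2w\in L^2$, so reaching $\D_a(\eta\cdot u)$ through $u=\mathcal{L}^*w$ would need third derivatives of $w$. Propositions~\ref{prop:1}--\ref{prop:2} bound $u$ by $\Pi\D u$, not conversely.
\end{itemize}
In the paper, the boundary condition, the trace and the partial regularity all come from one source. This is the uniform energy bound $\|\Pi\D u^\e\|_{L^2}\le C\|f\|_{L^2}$ for $u^\e\in W^{1,2}_0$, obtained by testing with $u^\e$ itself (Lemma~\ref{lem:1}, $\mathbf{C}\le0$, \eqref{eq:04}, Proposition~\ref{prop:1}). It lets Proposition~\ref{prop:2} and the closedness of $\D_a$ pass to the weak limit. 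A Hilbert-space argument in your spirit must live in that energy space. For instance, apply Lax--Milgram to the primal form $\int\mathbf{A}:\D u\ot\D v-\int(\mathbf{B}:\D u)\cdot v-\int\mathbf{C}u\cdot v$ on the completion of $\mD(\Om,\Si)$ under $\|\Pi\D\phi\|_{L^2}$.

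Step 2 is also wrong as written, though this part can be repaired. Against the multiplier $\mathbf{A}:\D^2\phi$, the $\mathbf{B}$-term is not a divergence. In Fourier variables it is a constant multiple of $\int\overline{\hat\phi}\cdot[\mathbf{A}(\xi),\mathbf{B}(\xi)]\hat\phi\,d\xi$, where $\mathbf{A}(\xi)_{\al\be}=\sum_{i,j}\mathbf{A}_{\al i\be j}\xi_i\xi_j$ and $\mathbf{B}(\xi)_{\al\be}=\sum_i\mathbf{B}_{\al\be i}\xi_i$, and \eqref{as:3} does not make these symbols commute. For example, $n=1$, $N=2$, $\mathbf{A}_{\al1\be1}=\mathrm{diag}(1,2)_{\al\be}$, $\mathbf{B}_{\al\be1}=1-\de_{\al\be}$, $\mathbf{C}=0$ satisfy every hypothesis. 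Yet for $\phi=(\chi',\chi)$ one gets $\int(\mathbf{B}:\D\phi)\cdot(\mathbf{A}:\D^2\phi)=\int\phi_1'\phi_2''=\int(\chi'')^2$. Likewise, $\mathbf{A}:\D(\mathbf{C}\phi)\ot\D\phi$ need not have a sign when $\mathbf{C}$ does not commute with $\mathbf{A}$; commuting with $\Si$ is not enough. Lemma~\ref{lem:1} cancels the $\mathbf{B}$-term only against $\phi$ itself. That multiplier gives $\frac{\nu}{C^2}\|\phi\|^2_{L^2}\le\nu\|\Pi\D\phi\|^2_{L^2}\le-\int\mathcal{L}^*\phi\cdot\phi$, hence $\|\phi\|_{L^2}\le C'\|\mathcal{L}^*\phi\|_{L^2}$, and your Hessian bound then follows by interpolation. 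By the previous paragraph, though, this does not rescue the construction. Finally, on uniqueness, the paper does not redefine the solution class. It applies the same energy identity to mollifications of the zero extension of $w=u-v$. That step requires the extension to solve the system across $\p\Om$, which is precisely where the boundary condition has to be used.
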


In the above and subsequently, ``$:$" will denote the Euclidean (trace) inner product on $\R^{N\by n}$, namely $A:B:=\mathrm{tr}(A^\top B)$, as well as the corresponding inner product in higher dimensional tensor spaces. Theorem \ref{thm:1} generalises a corresponding result established in \cite{nikos1} by the first appearing author, which applies to the special case of $\mathbf{B}\equiv 0$ and $\mathbf{C}\equiv 0$, namely without any lower order terms. Otherwise, to the best of our knowledge, problems of this type have not been considered before in the literature. Generally speaking, the inclusion of lower order terms in PDE problems is ordinarily a technical only difficulty. However, the situation is considerably different for the specific problem herein. Due to the degenerate nature of the problem, the inclusion of lower order terms causes essential complications. Even the very method followed in \cite{nikos1}, based on a viscosity approximation and the use of strictly elliptic variational second order elliptic systems, cannot be applied in this case. This owes to the fact that, the term involving $\mathbf{B}$ cannot arise variationally.

Note that the generalised solution obtained in Theorem \ref{thm:1}, is valued in the subspace $\Si\sub \R^N$, namely satisfies $\Sigma^{\perp}u \equiv 0$. This is necessary to guarantee uniqueness, because any arbitrary extension of $u$ from $L^2(\Omega, \Sigma)$ to $L^2(\Omega, \mathbb{R}^N)$ is also a solution. For example, if $u$ is an adapted distributional solution to the Dirichlet problem, for {\it any} $h \in L^1_{c}(\Omega)$, the map
\[
\tilde{u} \coloneqq \Sigma u + h\Sigma^\perp , \hspace{0.1cm}
\]
is also a solution to the Dirichlet problem, because the component of $u$ orthogonal to $\Si$ is not ``seen" by the coefficients of the PDE system, and since $h$ is compactly supported, it satisfies the boundary condition as well.

Even though the structural assumptions \eqref{eq:0}, \eqref{as:2} and \eqref{as:3} are easily quantifiable and confirmed, it is far less clear when is the non-degeneracy subspace $\Pi \sub \R^{N\by n}$ spanned by rank-one directions. Clearly, if $N=1$ (scalar valued case) or if $n=1$ (one-dimensional case), then it is automatically satisfied. Further, it is easy to see that not all subspaces $\Pi$ satisfy this property. For instance, the tensor $\mathbb A=A \otimes A$ (i.e.\ $\mathbb A_{\al i \be j} = A_{\al i} A_{\be j}$), where $A \in  \R^{N\by n}$ is fixed, satisfies $\Pi = \mathrm{span}[\{A\}]$, but if $\mathrm{rk}(A)\geq 2$ then $\Pi$ is not spanned by rank-one directions. For the general vector-valued case, let us recall the following structural hypothesis on $\mathbf{A}$,  taken from \cite{nikos1}; The hypothesis (SH) provides a {\it sufficient constructive condition} for $\Pi \sub \R^{N\by n}$ to be spanned by rank-one directions, as required in Theorem \ref{thm:1}.

\begin{structhyp1}(See \cite{nikos1})
The tensor $\mathbf{A}$ satisfies (SH) if it can be decomposed as 
\[
\mathbf{A}_{\alpha i \beta j} = B^1_{\alpha \beta}A^1_{ij} + \cdots + B^N_{\alpha \beta}A^N_{ij}
\]
and:
\begin{enumerate}[label=(\alph*)]
\item The symmetric matrices $\{A^1, \dotsc, A^N\} \subseteq \mathbb{R}^{n \by n}$ are non-negative.
\item The symmetric matrices $\{B^1, \dotsc, B^N\} \subseteq \mathbb{R}^{N \by N}$ are non-negative and have mutually orthogonal ranges. 
\item The eigenspaces of the matrices $A^{\gamma} - \lambda^{\gamma}_1 \mathrm I : \mathbb{R}^n \longrightarrow \mathbb{R}^n$ intersect for all $\gamma = 1, \dotsc, N$ along a common line. Here $\lambda^{\gamma}_1$ denotes the smallest positive eigenvalue of $A^\gamma$.
\end{enumerate}
\end{structhyp1}
This paper is organised as follows. This introduction section is followed by a preliminaries section in which we collect some auxiliary results and observations, and the last section contains the proof of our main result.


\section{Preliminaries}

We note first that our general notation is either standard PDE and variational notation (as e.g.\ in  \cite{E,EG,D,GM}), or otherwise self-explanatory. We begin by recalling two auxiliary results established in \cite{nikos1}, that will be need for the proof of the main result:

\begin{prop}[Degenerate partial Poincar\'e inequality, cf.\ \cite{nikos1}] \label{prop:1}
Let $\Pi$, $\Sigma$ be as in Theorem \ref{thm:1}, and suppose $\Omega \sub \R^n$ is a bounded open domain. Then, there exists a constant $C = C(\Omega, n, N) > 0$ depending only on the diameter of $\Omega$ and on the dimensions $n$, $N$ such that, for any $u \in W^{1,2}_0(\Omega, \mathbb{R}^N)$ we have the estimate

\begin{equation*}
\| \Sigma u \| _{L^2(\Omega)} \leq C \| \Pi \mathrm{D}u \| _{L^2(\Omega)}.
\end{equation*}
\end{prop}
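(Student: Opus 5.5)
The plan is to reduce the estimate to the classical one-dimensional Poincaré inequality applied to scalar projections of $u$ along rank-one directions contained in $\Pi$. By the definition of $\Sigma$ in \eqref{eq:01}, I will choose vectors $\eta^{(1)},\dots,\eta^{(m)}\in\mathbb{R}^N$, $m=\dim\Sigma$, forming a basis of $\Sigma$, together with $a^{(1)},\dots,a^{(m)}\in\mathbb{R}^n$ such that $\eta^{(k)}\otimes a^{(k)}\in\Pi$ for each $k$. After rescaling I may take $|a^{(k)}|=1$. Let $\{\xi^{(k)}\}\subseteq\Sigma$ be the dual basis, so that $\xi^{(k)}\cdot\eta^{(l)}=\delta_{kl}$. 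Then every $u$ satisfies $\Sigma u=\sum_k(\eta^{(k)}\cdot u)\,\xi^{(k)}$, and hence
\[
\|\Sigma u\|_{L^2(\Omega)}\le \sum_{k=1}^m |\xi^{(k)}|\,\|\eta^{(k)}\cdot u\|_{L^2(\Omega)}.
\]

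The key step is the directional Poincaré inequality. For $v\in W^{1,2}_0(\Omega)$ and a unit vector $a$, I will prove $\|v\|_{L^2(\Omega)}\le \mathrm{diam}(\Omega)\,\|\mathrm{D}_a v\|_{L^2(\Omega)}$. First argue for $v\in C^\infty_c(\Omega)$, extended by zero to $\mathbb{R}^n$. Rotate coordinates so that $a=e^1$. On each line parallel to $a$, the support of $v$ lies in an interval of length at most $\mathrm{diam}(\Omega)$. Write $v(x)=\int_{-\infty}^{x_1}\mathrm{D}_1v(t,x')\,dt$ and apply Cauchy--Schwarz on that interval. Integrating first along the line and then over $x'$ gives the bound (even with constant $\mathrm{diam}(\Omega)/\sqrt2$). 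The inequality then extends to $W^{1,2}_0$ by density.

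Applying this with $v=\eta^{(k)}\cdot u\in W^{1,2}_0(\Omega)$ and $a=a^{(k)}$, I will use that $\mathrm{D}_{a^{(k)}}(\eta^{(k)}\cdot u)=\mathrm{D}u:\eta^{(k)}\otimes a^{(k)}$. Since $\eta^{(k)}\otimes a^{(k)}\in\Pi$ and $\Pi$ also denotes the orthogonal projection, this equals $(\Pi\mathrm{D}u):\eta^{(k)}\otimes a^{(k)}$. Its modulus is at most $|\eta^{(k)}|\,|\Pi\mathrm{D}u|$. Summing over $k$ yields
\[
\|\Sigma u\|_{L^2}\le \mathrm{diam}(\Omega)\Big(\sum_k|\xi^{(k)}||\eta^{(k)}|\Big)\|\Pi\mathrm{D}u\|_{L^2}.
\]

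The main obstacle is the constant, because $\sum_k|\xi^{(k)}||\eta^{(k)}|$ depends on the conditioning of the chosen basis and so a priori on $\Pi$, not only on $n$ and $N$. To handle this, I would first try to pick the basis greedily so that it is well conditioned, taking each $\eta^{(k)}$ orthonormal to the previous ones when possible. If that is not possible, I would replace the dual-basis decomposition by a frame-type argument. The set $S$ of unit $\eta$ with $\eta\otimes a\in\Pi$ for some unit $a$ is a compact cone section spanning $\Sigma$. I would compare $|\Sigma u|^2$ with $\max_{\eta\in S}|\eta\cdot u|^2$ via a John-ellipsoid type selection of $m$ elements of $S$ with determinant maximal. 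By Cramer's rule, this choice gives $|\xi^{(k)}|\le 1$ relative to the selected vectors, and therefore a constant of the form $N\,\mathrm{diam}(\Omega)$.
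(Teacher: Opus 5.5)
Your first three paragraphs are correct and already form a complete proof, with constant $C=\mathrm{diam}(\Omega)\sum_k|\xi^{(k)}|\,|\eta^{(k)}|$. The dual-basis reconstruction $\Sigma u=\sum_k(\eta^{(k)}\cdot u)\,\xi^{(k)}$ is sound, as are the one-dimensional Poincar\'e inequality along lines parallel to $a^{(k)}$ and the identity $\mathrm{D}_{a}(\eta\cdot u)=\Pi\mathrm{D}u:\eta\otimes a$ for $\eta\otimes a\in\Pi$. The paper gives no proof of its own here (it defers to \cite{nikos1}), and this directional argument is the natural one. Two side remarks. First, rescaling to $|a^{(k)}|=1$ means you read the definition of $\Sigma$ with $a\neq 0$, which is the only sensible reading: with $a=0$ allowed, every $\eta$ qualifies, $\Sigma=\mathbb{R}^N$, and the inequality fails e.g.\ for $\mathbf{A}=0$. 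Second, your argument never uses that $\Pi$ is spanned by rank-one directions, only the definition of $\Sigma$, so it works for every $\Pi$.

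The genuine error is your last paragraph. A maximal-volume choice of $\eta^{(1)},\dots,\eta^{(m)}\in S$ together with Cramer's rule gives $|\xi^{(k)}\cdot\eta|\le 1$ for $\eta\in S$; it only bounds the coordinates of elements of $S$. It does not bound $|\xi^{(k)}|=1/\mathrm{dist}\big(\eta^{(k)},\mathrm{span}\{\eta^{(l)}:l\neq k\}\big)$. Nor does it bound $|\Sigma u|$ by a universal multiple of $\max_{\eta\in S}|\eta\cdot u|$.

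No selection can repair this, because a constant depending only on $\mathrm{diam}(\Omega)$, $n$, $N$ does not exist. Take $n=N=2$, $\theta\in(0,\pi/2)$, $\eta_\theta:=\cos\theta\,e^1+\sin\theta\,e^2$, and let $\mathbf{A}$ be the orthogonal projection onto $\Pi:=\mathrm{span}\{e^1\otimes e^1,\ \eta_\theta\otimes e^2\}$. The matrix $s\,e^1\otimes e^1+t\,\eta_\theta\otimes e^2$ has columns $s\,e^1$ and $t\,\eta_\theta$, so it has rank one only if $st=0$. Hence $S=\{\pm e^1,\pm\eta_\theta\}$ and $\Sigma=\mathbb{R}^2$. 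Your selection is forced to be $\{e^1,\eta_\theta\}$ up to signs, and its dual basis has $|\xi^{(1)}|=|\xi^{(2)}|=1/\sin\theta$. Already the constant vector $-\sin\theta\,e^1+\cos\theta\,e^2$ has norm $1$ but $\max_{\eta\in S}|\eta\cdot u|=\sin\theta$.

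The two generators of $\Pi$ are orthonormal, so $|\Pi\mathrm{D}u|^2=|\mathrm{D}_1u_1|^2+|\mathrm{D}_2(\eta_\theta\cdot u)|^2$. For $u:=g\,(-\sin\theta\,e^1+\cos\theta\,e^2)$ with $0\neq g\in C^\infty_c(\Omega)$ you get $\|\Sigma u\|_{L^2}=\|g\|_{L^2}$ but $\|\Pi\mathrm{D}u\|_{L^2}=\sin\theta\,\|\mathrm{D}_1 g\|_{L^2}$. The best constant is therefore at least $\|g\|_{L^2}/(\sin\theta\,\|\mathrm{D}_1 g\|_{L^2})$, which tends to $\infty$ as $\theta\to 0$ with $\Omega$, $n$, $N$ fixed.

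So delete the last paragraph and state the result as you actually proved it: $C$ depends on $\mathrm{diam}(\Omega)$ and on $\Pi$, through the conditioning of a rank-one basis of $\Sigma$. The proposition's phrase ``depending only on the diameter of $\Omega$ and on $n,N$'' must be read this way. This is harmless for the paper: $\mathbf{A}$ is fixed throughout, and the estimate is only needed uniformly in $\varepsilon$ (where $\nu$ depends on $\mathbf{A}$ anyway) and qualitatively in the uniqueness argument.
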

The point of the above result is that, even if we can control merely the projection of $\D u$ onto the subspace $\Pi \sub\R^{N\by n}$ of the mapping in $L^2(\Omega)$, we can still control at least the projection of the mapping itself onto $\Sigma \sub \R^N$ in $L^2(\Omega)$, even though perhaps not the full map in $L^2(\Omega)$. The result can be stated more concisely in the natural setting of ``Fibre Sobolev spaces", which contain mappings partially differentiable along projections as above (see \cite{K4}), but to simplify the presentation we state it merely for standard Sobolev mappings. This suffices, as in this paper it is applied only to our regularised approximate problems, which have weakly differentiable (strong) solutions. We refrain from giving the proof, for which we refer to \cite{nikos1}, but we should nonetheless mention that the rank-one spanning property is used essentially in the proof of this result. It will be used essentially in the proof in order to provide stable bounds for our approximations.

Next we state a result establishing the existence of a degenerate partial trace operator, also taken from \cite{nikos1}. Once again, it is stated only in the setting of the standard Sobolev spaces, as it will only be directly applied to the solutions of the approximating regularised problems.

\begin{prop}[Degenerate partial trace operator, cf.\ \cite{nikos1}] \label{prop:2}
Let $\Omega$, $\Pi$, $\Sigma$ be as in Theorem \ref{thm:1}. Then, there exists a closed $\mathcal{H}^{n-1}$-nullset $E \subseteq \partial \Omega$ such that, for any $\Gamma \Subset \partial \Omega \setminus E$, we can find $C= C(n,\Gamma) >0$:
\[
\| \Sigma u \| _{L^2(\Gamma, \mathcal{H}^{n-1})} \leq C \left(\| \Pi \mathrm{D}u \| _{L^2(\Omega)} + \| \Sigma u \| _{L^2(\Omega)}\right),
\] 
for all $u \in W^{1,2}(\Omega, \mathbb{R}^N)$.
\end{prop}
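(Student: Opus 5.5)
The approach is to exploit the rank-one spanning hypothesis. It reduces the estimate to finitely many scalar trace inequalities, each along one fixed direction $a\in\R^n$, proved by the divergence theorem.

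\emph{Reduction.} Since $\Pi$ is spanned by rank-one directions, fix finitely many $\eta^{(k)}\otimes a^{(k)}\in\Pi$, $k=1,\dots,m$, such that $\{\eta^{(k)}\}$ contains a basis of $\Sigma$; this is possible by the definition \eqref{eq:01} of $\Si$. Then there is $C=C(n,N)$ with $|\Sigma v|^2\le C\sum_k|\eta^{(k)}\cdot v|^2$ for all $v\in\R^N$. For each $k$ we have $\eta^{(k)}\cdot u=\eta^{(k)}\cdot\Sigma u$ and
\[
\D_{a^{(k)}}(\eta^{(k)}\cdot u)=\D u:\eta^{(k)}\otimes a^{(k)}=\Pi\D u:\eta^{(k)}\otimes a^{(k)},
\]
because $\eta^{(k)}\otimes a^{(k)}\in\Pi$. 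By density of $C^\infty(\overline\Om)$ in $W^{1,2}$ together with continuity of the usual trace, it then suffices to show, for smooth $u$, each fixed $k$, and $w:=\eta^{(k)}\cdot u$, $a:=a^{(k)}$,
\[
\|w\|_{L^2(\Gamma)}\le C\big(\|\D_a w\|_{L^2(\Om)}+\|w\|_{L^2(\Om)}\big).
\]

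\emph{Exceptional set and the scalar estimate.} Let $E_a$ be the set of $x\in\p\Om$ at which $a$ is tangent to $\p\Om$ (i.e.\ $a\in T_x\p\Om$, suitably interpreted via supporting hyperplanes at non-smooth points), and put $E:=\bigcup_k E_{a^{(k)}}$. $E$ is closed by upper semicontinuity of the normal cone of a convex set. On a compact $\Gamma\Subset\p\Om\setminus E$ we get $|\nu\cdot a|\ge c_\Gamma>0$ wherever the outer normal $\nu$ exists, i.e.\ $\mH^{n-1}$-a.e.\ (a convex boundary is Lipschitz). Cover $\Gamma$ by finitely many boundary patches on which $\nu\cdot a$ has a constant sign. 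Take a partition of unity $\psi_l\in C^\infty_c(\R^n)$, $0\le\psi_l\le1$, subordinate to these patches, with supports avoiding $E$ and $\sum_l\psi_l=1$ on $\Gamma$. The divergence theorem on the Lipschitz domain $\Om$ gives
\[
\int_{\p\Om}\psi_l\,w^2\,(a\cdot\nu)\,d\mH^{n-1}=\int_\Om \big(2\psi_l\,w\,\D_a w+w^2\,a\cdot\D\psi_l\big)\,dx .
\]
On $\mathrm{supp}\,\psi_l\cap\p\Om$ the left side has fixed sign and modulus at least $c_\Gamma\int\psi_l w^2$. By Cauchy--Schwarz the right side is bounded by $C(\|\D_aw\|_{L^2}^2+\|w\|^2_{L^2})$. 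Summing over $l$ and then over $k$ yields the claim.

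\emph{Main obstacle.} The delicate step is showing that $E$ is $\mH^{n-1}$-null, and this is exactly where strict convexity enters. I would argue per direction $a$. By strict convexity (a tangent hyperplane meets $\p\Om$ only at its touching point), each line parallel to $a$ meets $E_a$ in at most one point. Hence the projection $P$ onto $a^\perp$ maps $E_a$ injectively onto the relative boundary of the convex set $P(\Om)\subseteq a^\perp$, which is an $(n-2)$-dimensional Lipschitz hypersurface. To turn this into $\mH^{n-1}(E_a)=0$ I would represent $\p\Om$ near any point of $E_a$ as a Lipschitz graph over a hyperplane transversal to $a$. In those coordinates $E_a$ is the image of a set contained in a graph of codimension one inside the parameter hyperplane. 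The goal is to check that such a set has $\mH^{n-1}$-measure zero under the Lipschitz graph map. If this parametrisation is hard to make rigorous at non-smooth boundary points, a fallback is a Fubini argument: slice $E_a$ by 2-planes containing $a$. On each slice the convex curve has at most two points with tangent parallel to $a$, so each slice of $E_a$ is finite, and the coarea inequality then gives $\mH^{n-1}(E_a)=0$.
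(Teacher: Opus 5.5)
Your reduction to scalar inequalities, the Gauss--Green estimate and the closedness of $E$ (via the closed graph of the normal cone) are all correct. The paper gives no proof of this proposition and only cites \cite{nikos1}, so your argument has to stand on its own. Note also that it uses only the definition of $\Sigma$, not the rank-one spanning of $\Pi$.

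The step that does not go through as written is $\mathcal{H}^{n-1}(E_a)=0$, which is part of the claim.

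\emph{First route.} The injectivity of the projection $P$ along $a$ tells you about lines parallel to $a$. A Lipschitz chart of $\partial\Omega$ near $E_a$ must use a direction $d$ transversal to $\partial\Omega$, so $d\neq a$. In such a chart over a hyperplane $V$, the boundary points whose parameters lie on a line of $V$ parallel to the projection of $a$ form a whole planar section $\partial\Omega\cap(v+\mathrm{span}(a,d))$. That section meets infinitely many lines parallel to $a$, so nothing you have shown puts the parameter set of $E_a$ inside a codimension-one graph.

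\emph{Fallback.} The coarea (Eilenberg) inequality goes the wrong way. It bounds $\int\mathcal{H}^1(E_a\cap f^{-1}(y))\,dy$ from above by a multiple of $\mathcal{H}^{n-1}(E_a)$, so finite slices give nothing. The coarea \emph{formula} on the rectifiable set $\partial\Omega$ only yields $\int_{E_a}J_f\,d\mathcal{H}^{n-1}=0$. For $f$ the projection onto $\mathrm{span}(a,b)^\perp$, one has $J_f(x)=0$ exactly when $\mathrm{span}(a,b)\subseteq T_x\partial\Omega$, i.e.\ when $\nu(x)\perp b$. On $E_a$ we already have $\nu\perp a$, so this degeneracy is not excluded there.

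Each route can be repaired with ingredients you already have. The most direct fix is to use your injectivity observation on a surface where it becomes a Fubini statement. $E_a$ lies in the cylinder $Z:=P^{-1}\big(\partial_{\mathrm{rel}}P(\overline{\Omega})\big)$. Since $P(\overline{\Omega})$ is a convex body in $a^\perp$, near any of its points $Z=\{w+\phi(w)e+ta:\ w\in W,\ t\in\mathbb{R}\}$ with $W\oplus\mathbb{R}e=a^\perp$ and $\phi$ Lipschitz. In the parameters $(w,t)$, the closed set corresponding to $E_a$ meets each line $\{w\}\times\mathbb{R}$ at most once. It is therefore $\mathcal{L}^{n-1}$-null by Fubini, its image under the Lipschitz chart is $\mathcal{H}^{n-1}$-null, and compactness of $E_a$ finishes the argument.

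Two alternatives also work:
\begin{enumerate}
\item Keep the chart of $\partial\Omega$ in direction $d$ and use your planar fact: each strictly convex section by $v+\mathrm{span}(a,d)$ carries at most two points of $E_a$. This gives at most two parameter points on each line of $V$ parallel to the projection of $a$, and Fubini applies.
\item In the coarea route, let $b$ run through a basis $b_1,\dots,b_{n-1}$ of $a^\perp$. This forces $\nu\perp a,b_1,\dots,b_{n-1}$ at $\mathcal{H}^{n-1}$-a.e.\ point of $E_a$, which is impossible unless $\mathcal{H}^{n-1}(E_a)=0$.
\end{enumerate}
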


Proposition \ref{prop:2} will be used in order to establish the satisfaction of the Dirichlet boundary condition for our degenerate elliptic problem, as standard results on the usual trace operator as e.g.\ in \cite{EG} cannot be applied.

\subsection*{Discussion of the method followed in  \cite{nikos1}, versus the methods used herein.} \label{Variationally} In the paper \cite{nikos1}, the method of proof followed was based on a ``vanishing viscosity" approximation by strictly elliptic systems, for which a (unique) approximate solution was established by using variational methods, plus stable estimates which led to the proof of existence of a unique adapted distributional solution to the degenerate problem. In this paper, following \cite{nikos1}, we are also employing an appropriate vanishing viscosity approximation, but in our case the method has to be modified, as it is not possible to obtain solutions to these approximations by variational methods, when lower order terms are present.

More precisely, the starting point of the method of proof of Theorem \ref{thm:1} is based on the vanishing viscosity approximation of \eqref{eq:1} by the (strictly elliptic) PDE systems

\begin{equation} \label{eq:1s}
\left\{
\begin{gathered}
			\hfil \sum_{\beta, i, j} (\mathbf{A}_{\alpha i \beta j} + \varepsilon \delta_{\alpha \beta} \delta_{ij})\mathrm{D}_{ij}^{2}u^{\varepsilon}_{\beta} + \sum_{\beta, i} \mathbf{B}_{\alpha \beta i}\mathrm{D}_{i}u^{\varepsilon}_{\beta} + \sum_{\beta} \mathbf{C}_{\alpha \beta}u^{\varepsilon}_{\beta} = f_{\alpha}, \hspace{0.1cm} \text{in $\Omega$}, \\
            u^{\varepsilon} = 0, \hspace{0.1cm} \text{on $\partial \Omega$.}
		 \end{gathered}
		 \right.
\end{equation}
However, the variational method doesn't work in our case with lower order terms. We will demonstrate this \emph{formally} in the special case of $\e=0$, as the issue arises if a gradient term is present. Indeed, if we hypothetically defined $L(P, \eta, x)$ (for $P \in \mathbb{R}^{Nn}$, $\eta \in \mathbb{R}^N$ and $x \in \Omega$) as

\begin{equation*}
L(P, \eta, x) \coloneqq \frac{1}{2}\sum_{\alpha, \beta, i, j} \mathbf{A}_{\alpha i \beta j} P_{\alpha i}P_{\beta j} + \sum_{\alpha, \beta, i} \mathbf{B}_{\alpha \beta i}P_{\beta i} \eta_{\alpha} - \frac{1}{2}\sum_{\alpha, \beta} \mathbf{C}_{\alpha \beta} \eta_{\beta}\eta_{\alpha} + \sum_{\beta} f_{\beta}\eta_{\beta}
\end{equation*}
we may compute
\[
\left\{ 
 \ \ 
\begin{split}
L_{P_{\alpha i}} &= \sum_{\beta, i} \mathbf{A}_{\alpha i \beta j}P_{\beta j} + \sum_{\lambda} \mathbf{B}_{\lambda \alpha i} \eta_{\lambda},
\\
L_{\eta_{\alpha}} &= \sum_{\beta, i} \mathbf{B}_{\alpha \beta i} P_{\beta i} - \sum_{\alpha} \mathbf{C}_{\alpha \beta}\eta_{\beta} + f_{\alpha}.
\end{split}
\right.
\]
Therefore,
\[
\sum_{i}\D_i  \big(L_{P_{\alpha i}}(\mathrm{D}u, u, \cdot)\big) = \sum_{\beta, i, j} \mathbf{A}_{\alpha i \beta j}\mathrm{D}^2_{ij}u_{\beta} + \sum_{\lambda, i} \mathbf{B}_{\lambda \alpha i} \mathrm{D}_iu_{\lambda},
\]
and hence, the Euler-Lagrange system of equations
\[
-\sum_{i}\D_i\big(L_{P_{\alpha i}}(\mathrm{D}u, u, \cdot)\big) + L_{\eta_{\alpha}}(\mathrm{D}u, u, \cdot) = 0
\]
becomes
\[
-\sum_{\beta, i, j} \mathbf{A}_{\alpha i \beta j}\mathrm{D}^2_{ij}u_{\beta} - \sum_{\lambda, i} \mathbf{B}_{\lambda \alpha i} \mathrm{D}_iu_\lambda + \sum_{\beta, i} \mathbf{B}_{\alpha \beta i} \mathrm{D}_iu_\beta- \sum_{\alpha} \mathbf{C}_{\alpha \beta}u_{\beta} + f_{\alpha} = 0.
\]
However, under the symmetry assumption that $\mathbf{B}_{\alpha \beta i} = \mathbf{B}_{\beta \alpha i}$, the above system of equations yields
\begin{equation}
\sum_{\beta, i, j} \mathbf{A}_{\alpha i \beta j}\mathrm{D}^2_{ij}u_{\beta} +\sum_{\alpha} \mathbf{C}_{\alpha \beta}u_{\beta} = f_{\alpha} \ \ \text{ in $\Omega$.}
\end{equation}
Therefore, assumption \eqref{as:3} leads to the cancellation of the first-order term of the PDE system. More generally, the following result can be established, which shows the limitations arising in the case there is an explicit gradient term present in the linear system.


\begin{lem} \label{lem:1}
Let $\mathbf{B}: \mathbb{R}^{Nn} \longrightarrow \mathbb{R}^{N}$ be a linear map satisfying the symmetry condition $\mathbf{B}_{\alpha \beta i} = \mathbf{B}_{\beta \alpha i}$. Then, for any $u \in W^{1,2}_0(\Omega, \mathbb{R}^N)$, we have
\begin{equation*}
\int_{\Omega} \sum_{\alpha, \beta, i}\mathbf{B}_{\alpha \beta i} \mathrm{D}_i u_{\beta} u_{\alpha} = 0.
\end{equation*}
\end{lem}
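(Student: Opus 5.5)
The plan is to reduce the claim to the statement that $\int_\Omega \D_i(u_\alpha u_\beta)=0$ for $u\in W^{1,2}_0$, and then use the symmetry of $\mathbf{B}$ to rewrite the integrand as such a total derivative. First I would prove the identity for $u\in C^\infty_c(\Omega,\R^N)$, and then pass to general $u\in W^{1,2}_0(\Omega,\R^N)$ by density. The functional
\[
u\ \mapsto\ \int_{\Omega} \sum_{\alpha, \beta, i}\mathbf{B}_{\alpha \beta i}\, \mathrm{D}_i u_{\beta}\, u_{\alpha}
\]
is continuous on $W^{1,2}(\Omega,\R^N)$, since it is a bounded bilinear expression in $(\D u,u)$ and by Cauchy--Schwarz it is bounded by $|\mathbf{B}|\,\|\D u\|_{L^2}\|u\|_{L^2}$. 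It therefore suffices to verify the identity on the dense subspace $C^\infty_c$.

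For smooth compactly supported $u$ I would fix $i$ and apply the product rule to obtain
\[
\D_i(u_\alpha u_\beta)=u_\alpha\,\D_i u_\beta+u_\beta\,\D_i u_\alpha .
\]
Multiplying by $\mathbf{B}_{\alpha\beta i}$ and summing over $\alpha,\beta$, I would use $\mathbf{B}_{\alpha\beta i}=\mathbf{B}_{\beta\alpha i}$ and relabel $\alpha\leftrightarrow\beta$ in the second term. This shows the two resulting sums are equal, so
\[
\sum_{\alpha,\beta}\mathbf{B}_{\alpha\beta i}\,\D_iu_\beta\, u_\alpha=\tfrac12\,\D_i\Big(\sum_{\alpha,\beta}\mathbf{B}_{\alpha\beta i}\,u_\alpha u_\beta\Big).
\]
Integrating over $\Omega$, the right-hand side vanishes by the divergence theorem, or simply by the fundamental theorem of calculus in the $x_i$ variable, because $u$ has compact support. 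Summing over $i$ then gives the claim for test maps.

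There is no genuine obstacle here. The only point needing care is the density step: the product $u_\alpha u_\beta$ of two $W^{1,2}$ functions is only in $W^{1,1}$, so a direct integration by parts for general $u$ would need justification. Routing the argument through smooth approximants $u^k\to u$ in $W^{1,2}$, together with the continuity of the bilinear form noted above, sidesteps this. Neither the boundary regularity of $\Omega$ nor the projections $\Pi,\Sigma$ play any role.
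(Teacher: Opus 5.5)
Your proof is correct and follows essentially the same route as the paper: the product rule plus the symmetry $\mathbf{B}_{\alpha\beta i}=\mathbf{B}_{\beta\alpha i}$ turns the integral into (half) the integral of the total derivative $\D_i\big(\sum_{\alpha,\beta}\mathbf{B}_{\alpha\beta i}u_\alpha u_\beta\big)$, which vanishes. The only difference is how you justify that vanishing. You argue by density of $C^\infty_c$ in $W^{1,2}_0$ together with continuity of the form, whereas the paper integrates by parts directly and uses the zero boundary trace on $\partial\Omega$; your version is slightly more careful and needs no boundary regularity.
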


\begin{proof}[Proof of Lemma \ref{lem:1}]
Fix $u \in W^{1,2}_0(\Omega, \mathbb{R}^N)$. Note first that we can rewrite the term $(\mathrm{D}_i u_\beta)u_\alpha$ as 
\[
\mathrm{D}_i(u_\alpha u_\beta) - (\mathrm{D}_i u_\alpha)u_\beta. 
\]
Therefore, we have the idenity
\begin{equation}\label{eq:5new}
\int_{\Omega} \sum_{\alpha, \beta, i}\mathbf{B}_{\alpha \beta i} (\mathrm{D}_i u_{\beta}) u_{\alpha} = \int_{\Omega} \sum_{\alpha, \beta, i}\mathbf{B}_{\alpha \beta i} \mathrm{D}_i(u_{\alpha} u_{\beta}) - \int_{\Omega} \sum_{\alpha, \beta, i}\mathbf{B}_{\alpha \beta i} (\mathrm{D}_i u_{\alpha}) u_{\beta}.
\end{equation}
Integrating by parts the first term on the right-hand side of equation \eqref{eq:5new} leads to

\begin{equation}
\int_{\Omega} \sum_{\alpha, \beta, i}\mathbf{B}_{\alpha \beta i} \mathrm{D}_i(u_{\alpha} u_{\beta}) = \int_{\partial \Omega} \sum_{\alpha, \beta, i}\mathbf{B}_{\alpha \beta i}(u_{\alpha} u_{\beta})\nu_i = 0,
\end{equation}
where the last equality owes to that $u \in W^{1,2}_0(\Omega, \mathbb{R}^N)$. Therefore, we arrive at
\begin{equation} \label{eq:4new}
\int_{\Omega} \sum_{\alpha, \beta, i}\mathbf{B}_{\alpha \beta i} (\mathrm{D}_i u_{\beta}) u_{\alpha} = - \int_{\Omega} \sum_{\alpha, \beta, i}\mathbf{B}_{\alpha \beta i} (\mathrm{D}_i u_{\alpha}) u_{\beta}.
\end{equation}
Under the assumption that $\mathbf{B}_{\alpha \beta i} = \mathbf{B}_{\beta \alpha i}$, we obtain
\[
\int_{\Omega} \sum_{\alpha, \beta, i}\mathbf{B}_{\alpha \beta i} (\mathrm{D}_i u_{\beta}) u_{\alpha} = \int_{\Omega} \sum_{\alpha, \beta, i}\mathbf{B}_{\beta \alpha i} (\mathrm{D}_i u_{\alpha}) u_{\beta}= \int_{\Omega} \sum_{\alpha, \beta, i}\mathbf{B}_{\alpha \beta i} (\mathrm{D}_i u_{\alpha}) u_{\beta},
\]
where the first equality is due to the relabelling of dummy indices. Therefore, by equation \eqref{eq:4new} and the above, we deduce
\begin{equation*}
\int_{\Omega} \sum_{\alpha, \beta, i}\mathbf{B}_{\alpha \beta i} (\mathrm{D}_i u_{\beta}) u_{\alpha} = 0.
\end{equation*}
The conclusion ensues.
\end{proof}

The main difference between the methods followed in \cite{nikos1} in relation to the methods employed herein is that we utilise an appropriate adaptation of the Lax Milgram theorem to the present setting in order to obtain existence of solution to our approximate regularised elliptic problems, which involve lower order terms.

\subsection*{Examples} (i) Lemma \ref{lem:1} is not in general true if $\mathbf{B}$ is not constant. Let
\[
\mathbf{B} \coloneqq e_1 \otimes e_2 \otimes e_1.
\]
It is clear that $\mathbf{B}_{\alpha \beta i} \neq \mathbf{B}_{\beta \alpha i}$. Then, for any $u \in W^{1,2}_0(\Omega, \mathbb{R}^N)$ we have
\begin{equation*}
\sum_{\beta, i} \mathbf{B}_{\alpha \beta i}\mathrm{D}_iu_\beta = \sum_{\beta, i} (e_1)_\alpha (e_2)_\beta (e_1)_i\mathrm{D}_iu_\beta = (e_2 \cdot ((\mathrm{D}u)e_1))e_1 = \mathrm{D}_1u_2 e_1. 
\end{equation*}
Therefore
\begin{equation*}
\int_{\Omega} \sum_{\alpha, \beta, i} \mathbf{B}_{\alpha \beta i}\mathrm{D}_iu_\beta u_\alpha  = \int_{\Omega} \mathrm{D}_1 u_2 u_1 =  -\int_{\Omega} u_2 \mathrm{D}_1u_1 + \int_{\partial \Omega} u_2u_1 \nu_1 =  -\int_{\Omega} u_2 \mathrm{D}_1u_1.
\end{equation*}
It is clear from the above that the right hand side term cannot always vanish for all maps $u \in W^{1,2}_0(\Omega, \mathbb{R}^N)$.

\smallskip

\noindent (ii) This is one of the numerous examples of degenerate elliptic PDE systems which satisfies our assumptions, and hence our result applies. Suppose $\alpha, \beta \in \{1, 2\}$ and $i, j \in \{1, 2\}$. We define 

\begin{equation*}
\mathbf{A}_{\alpha \beta i j} \coloneqq  \delta_{\alpha 1} \delta_{\beta 1}\delta_{i j} = \begin{cases}
\delta_{\alpha \beta} \delta_{ij} &\text{$\alpha = \beta = 1$},\\
0 &\text{$\alpha = 1$, $\beta = 2$},\\
0 &\text{$\alpha = 2$, $\beta = 1$},\\
0 &\text{$\alpha = \beta = 2$}.
\end{cases} 
\end{equation*}

\begin{equation*}
\mathbf{B}_{\alpha \beta i} \coloneqq  \delta_{\alpha 1} \delta_{\beta 1}B_i = \begin{cases}
B_i &\text{$\alpha = \beta = 1$},\\
0 &\text{$\alpha = 1$, $\beta = 2$},\\
0 &\text{$\alpha = 2$, $\beta = 1$},\\
0 &\text{$\alpha = \beta = 2$},
\end{cases} 
\end{equation*}
where $B_i \in \mathbb{R}$, for all $i \in \{1, 2\}$. And
\begin{equation*}
\mathbf{C}_{\alpha \beta} \coloneqq  \delta_{\alpha 1} \delta_{\beta 1}C= \begin{bmatrix}
C & 0 \\
0 & 0
\end{bmatrix}
\end{equation*}
where $C \leq 0$ in $\R$. This leads to the degenerate elliptic PDE system
\begin{align*}
\sum_{\beta, i, j} \delta_{\alpha 1} \delta_{\beta 1}\delta_{i j} \mathrm{D}_{ij}^{2}u_{\beta} + \sum_{\beta, i} \delta_{\alpha 1} \delta_{\beta 1}B_i \mathrm{D}_{i}u_{\beta} + \sum_{\beta}  \delta_{\alpha 1} \delta_{\beta 1}Cu_{\beta} &= f_{\alpha},
\end{align*}
which can be rewritten as
\begin{align*}
\sum_{i} \delta_{\alpha 1} \mathrm{D}_{ii}^{2}u_{1} + \sum_{i} \delta_{\alpha 1} B_i \mathrm{D}_{i}u_{1} + \delta_{\beta 1}Cu_{1} &= f_{\alpha}.
\end{align*}
In particular, this examples shows that, owing to the degeneracy of the coefficients, the problem is solvable only under the necessary compatibility condition $f_2\equiv0$.

\section{Proof of the Main Result}

In this section we establish Theorem \ref{thm:1}.

\begin{proofpart}
We begin by discussing some auxiliary algebraic properties arising due to the degenerate nature of the problem. In view of the definitions \eqref{eq:01}, the properties \eqref{eq:0} and the spectral theorem applied to the symmetric linear map $\mathbf{A} : \mathbb{R}^{Nn} \longrightarrow \mathbb{R}^{Nn}$, imply that there exists a $\nu > 0$ such that

\begin{equation} \label{eq:02}
\Pi \mathbf{A} \Pi = \mathbf{A} =  \mathbf{A} \Pi = \Pi \mathbf{A},
\end{equation}
\begin{equation}  \label{eq:04}
\sum_{\alpha, \beta, i, j} \mathbf{A}_{\alpha i \beta j} Q_{\alpha i} Q_{\beta j} \geq \nu \lvert \Pi Q \rvert^2, \hspace{0.3cm} Q \in \mathbb{R}^{Nn}.
\end{equation}
Identity \eqref{eq:02} states that $\mathbf{A}$ commutes with $\Pi$ on its range. \eqref{eq:04} follows from the fact that the restriction $\mathbf{A} \rvert_{\Pi}$ on the range is invertible. Furthermore, we have that
\begin{equation} \label{eq:4}
\mathbf{X} \in \mathbb{R}^{Nn^2}  \implies \sum_{\alpha, \beta, i, j} (\mathbf{A}_{\alpha i \beta j}\mathbf{X}_{\beta i j})e^{\alpha} \in \Sigma \subseteq \mathbb{R}^{N}.
\end{equation}
Equation \eqref{eq:4} says that the image of the symmetric linear map $\mathbf{A} : \mathbb{R}^{Nn^2} \longrightarrow \mathbb{R}^{N}$ lies completely in $\Sigma$. Every $\mathbf{X} \in \mathbb{R}^{Nn^2}$ can be expressed as
\[
\mathbf{X} = \sum_{A = 1}^{Nn^2} \xi^A \otimes d^A \otimes b^A, \hspace{0.4cm} \xi^A \in \mathbb{R}^N, \hspace{0.4cm}d^A, b^A \in \mathbb{R}^n.
\]
Then, by \eqref{eq:02}, we have 
\begin{equation} \label{eq:ta}
\begin{split}
\eta_{\alpha} \coloneqq \sum_{\beta, i, j} \mathbf{A}_{\alpha i \beta j} \mathbf{X}_{\beta i j} &= \sum_{A = 1}^{Nn^2} \sum_{\beta, i, j} \mathbf{A}_{\alpha i \beta j} \xi^{A}_{\beta}d^{A}_{i}b^{A}_{j} \\&= \sum_{\kappa, k, i}\biggl\{\sum_{A = 1}^{Nn^2} \sum_{\beta, j} \mathbf{A}_{\kappa k \beta j} \xi^{A}_{\beta}b^{A}_{j}\biggl\} \Pi_{\kappa k \alpha i} d^{A}_{i}. \nonumber
\end{split}
\end{equation}
By the definition of $\Sigma$ in \eqref{eq:0}, equation \eqref{eq:ta} says $\eta \coloneqq \sum_{\alpha} \eta_{\alpha}e^{\alpha}$ is a member of $\Sigma$ (hence \eqref{eq:4} follows). Assumption \eqref{as:2} implies that $\mathbf{C}$ commutes with $\Sigma$ on its range and $\mathbf{C}$ maps only from $\Sigma \subseteq \mathbb{R}^N$ to $\Sigma \subseteq \mathbb{R}^N$. Assumption \eqref{as:3} implies that $\mathbf{B}$ maps $\Pi \subseteq \mathbb{R}^{Nn}$ into $\Sigma \subseteq \mathbb{R}^N$.
\end{proofpart}

\begin{proofpart}
Fix $f \in L^2(\Omega,\Sigma)$ and $\varepsilon \in (0,1)$, and consider the regularised Dirichlet problem \eqref{eq:1s}. The weak formulation of this problem for some putative solution $u^\e \in W^{1,2}_0(\Omega, \R^N)$ reads
\begin{equation} \label{eq:3}
\begin{split}
\int_{\Omega} \biggl\{\sum_{\alpha,\beta, i, j} \mathbf{A}^{\varepsilon}_{\alpha i \beta j}\mathrm{D}_{i}u^{\varepsilon}_{\alpha}\mathrm{D}_{j}v_{\beta} - \sum_{\alpha, \beta, i} \mathbf{B}_{\alpha \beta i}\mathrm{D}_{i}u^{\varepsilon}_{\beta}v_{\alpha} - \sum_{\alpha, \beta} \mathbf{C}_{\alpha \beta} & u^{\varepsilon}_{\beta}v_{\alpha} \biggl\}  
\\
&=  -\int_{\Omega} \sum_{\gamma} f_{\gamma}v_{\gamma},
\end{split}
\end{equation}
for all $v \in W^{1,2}_0(\Omega, \R^N)$, where in \eqref{eq:3} we have symbolised for brevity
\begin{equation*}
\mathbf{A}^{\varepsilon}_{\alpha i \beta j} \coloneqq \mathbf{A}_{\alpha i \beta j} + \varepsilon \delta_{\alpha \beta} \delta_{i j}.
\end{equation*}
The first goal is to establish the existence and uniqueness of a weak solution $u^\e \in W^{1,2}_0(\Omega, \R^N)$ to the approximating regularised problem \eqref{eq:1s}. To this aim, we define
\[
\mathrm{E}_\e \ : \ \ W^{1,2}_0(\Om,\R^N) \by  W^{1,2}_0(\Om,\R^N) \larrow \R
\]
by setting
\[
\mathrm{E}_\e(u, v) \coloneqq \int_{\Omega} \biggl\{\sum_{\alpha,\beta, i, j} \mathbf{A}^{\varepsilon}_{\alpha i \beta j}\mathrm{D}_{i}u_{\alpha}\mathrm{D}_{j}v_{\beta} - \sum_{\alpha, \beta, i} \mathbf{B}_{\alpha \beta i}\mathrm{D}_{i}u_{\beta}v_{\alpha} - \sum_{\alpha, \beta} \mathbf{C}_{\alpha \beta}u_{\beta}v_{\alpha} \biggl\}.
\]
Then, equation \eqref{eq:3} can also be written as
\[
\mathrm{E}_\e(u^\e, v) + \langle f,v \rangle = 0,
\]
for all $v \in W^{1,2}_0(\Omega, \R^N)$, where we have symbolised
\[
\langle f,v \rangle :=  \int_{\Omega} \sum_{\gamma} f_{\gamma}v_{\gamma}.
\]
\end{proofpart}

\begin{proofpart}
Note first that $\mathrm{E}_\e(\cdot, \cdot)$ is clearly a bilinear form on $W^{1,2}_0(\Omega, \R^N)$. We will now show that there exists a unique $u^\varepsilon \in W^{1,2}_0(\Omega, \mathbb{R}^N)$ such that 
\[
\mathrm{E}_\e(u^\e, v) = -\langle f,v \rangle,
\]
for all $v \in W^{1,2}_0(\Omega, \R^N)$. Note that $\mathrm{E}_\e$ is {\it not} a symmetric bilinear form (due to its first-order part), hence the Riesz representation theorem cannot be applied on this occasion. Fix $u \in W^{1,2}_0(\Om,\R^N)$. Let us rewrite the bilinear form $\mathrm{E}_\e$ on the diagonal as
\begin{align*}
\mathrm{E}_\e(u, u) &= \int_{\Omega} \sum_{\alpha,\beta, i, j} \mathbf{A}_{\alpha i \beta j}\mathrm{D}_{i}u_{\alpha}\mathrm{D}_{j}u_{\beta} +\varepsilon \int_{\Omega} \lvert \mathrm{D}u \rvert ^2  
\\ 
&\qquad\quad - \int_{\Omega}  \sum_{\alpha, \beta, i} \mathbf{B}_{\alpha \beta i}\mathrm{D}_{i}u_{\beta}u_{\alpha} - \int_{\Omega}  \sum_{\alpha, \beta} \mathbf{C}_{\alpha \beta}u_{\beta}u_{\alpha}.
\end{align*}
We first show that $\mathrm{E}_\e$ is coercive. Recall that, by assumption \eqref{eq:0} we have that $\mathbf{A}:Q\ot Q \geq 0$, for any $Q\in \R^{N\by n}$, and by assumption \eqref{as:2} we similarly have that $\mathbf{C}:\eta\ot \eta \leq 0$, for any $\eta \in \R^N$. These algebraic inequalities imply that the first and last integrals above are non-negative. Further, in view of assumption \eqref{as:3}, by Lemma \ref{lem:1} we have
\[
\int_{\Omega}  \sum_{\alpha, \beta, i} \mathbf{B}_{\alpha \beta i}\mathrm{D}_{i}u_{\beta}u_{\alpha} = 0,
\]
which implies that the integral involving $\mathbf{B}$ vanishes identically. These facts allow us to estimate
\[
\mathrm{E}_\e(u, u) \geq \varepsilon \int_{\Omega} \lvert \mathrm{D}u \rvert ^2 ,
\]
for any $u \in W^{1,2}_0(\Om,\R^N)$. Further, by the (standard) Poincar\'e inequality, there exists $C=C_\e>0$ which allows us to estimate
\begin{align}
\mathrm{E}_\e(u, u) \geq C_\e \| u \| ^2_{W^{1,2}(\Omega)}, \nonumber
\end{align}
for any $u \in W^{1,2}_0(\Om,\R^N)$. This establishes that $E_{\epsilon}$ is indeed coercive. Now, we show that $\mathrm{E}_\e$ is bounded linear (equivalently, continuous) in both variables. To this end, fix $u,v \in W^{1,2}_0(\Om,\R^N)$. By the Cauchy-Schwarz and Poincar\'e inequalities, and by using that $\e \in [0,1]$, we estimate
\begin{align}
\lvert \mathrm{E}_\e(u, v) \rvert &\leq \sum_{\alpha, \beta, i, j} \big| \mathbf{A}_{\alpha i \beta j} \big| \int_{\Omega} \lvert \mathrm{D}u \rvert \lvert \mathrm{D}v \rvert + \varepsilon  \int_{\Omega} \lvert \mathrm{D}u \rvert \lvert \mathrm{D}v \rvert 
\\ 
&\qquad\quad + \sum_{\alpha, \beta, i} \big| \mathbf{B}_{\alpha \beta i} \big| \int_{\Omega} \lvert \mathrm{D}u \rvert \lvert v \rvert + \sum_{\alpha, \beta} | \mathbf{C}_{\alpha \beta}| \int_{\Omega} \lvert u \rvert \lvert v \rvert \nonumber 
\\ 
&\leq c_{1} \| \mathrm{D}u \| _{L^2(\Omega)}\| \mathrm{D}v \| _{L^2(\Omega)} \nonumber 
\\ 
&\qquad\quad + c_{2} \| u \| _{L^2(\Omega)}\| v \| _{L^2(\Omega)} + c_{3} \| \mathrm{D}u \| _{L^2(\Omega)}\| v \| _{L^2(\Omega)} \nonumber 
\\ 
&\leq c_{4} \| \mathrm{D}u \| _{L^2(\Omega)}\| \mathrm{D}v \| _{L^2(\Omega)} \nonumber 
\\ 
&\leq  c_{4} \| u \| _{W^{1,2}(\Omega)}\| v \| _{W^{1,2}(\Omega)}, \nonumber
\end{align}
for some appropriate constants $c_1, c_2, c_3,c_4>0$ depending only the parameters of the Dirichlet problem. This shows that $E_{\epsilon}$ is indeed bounded linear in both variables. Finally, note that $f$ defines a bounded linear functional on $W^{1,2}(\Omega,\R^N)$, since
\begin{align}
\big|  \langle f,v \rangle \big| \leq \| f \| _{L^2(\Omega)}\| v \| _{L^2(\Omega)} \leq \| f \| _{L^2(\Omega)}\| v \| _{W^{1,2}(\Omega)}. \nonumber
\end{align}
In conclusion, all the assumptions of the Lax-Milgram Theorem, are satisfied for the bilinear form $\mathrm{E}_\e$. This yields the existence of a unique $u^\varepsilon \in W^{1,2}_{0}(\Omega, \mathbb{R}^N)$ such that
\[
\mathrm{E}_\e(u^{\varepsilon}, v) + \langle f,v \rangle = 0,
\]
for all $v \in W^{1,2}_0(\Omega,\R^N)$, as claimed. 
\end{proofpart}


\begin{proofpart} Now we derive stable estimates for the approximate solutions $u^\e$, that will allow us to pass to the limit as $\e \to 0$ along a subsequence, to obtain an adapted distributional solution to the original problem. To this aim, fix $\delta > 0$, and recall the compatibility condition for $f$ which states that $\Sigma f = f$ (i.e.\ $f$ is valued into $\Sigma \sub \R^N$), and assumptions  \eqref{eq:0}, \eqref{as:2}, \eqref{as:3}. By Lemma \ref{lem:1} and \eqref{eq:04}, Young's inequality applied to the weak formulation of \eqref{eq:1s} for the admissible choice of $v:=u^\e$, yields
\[
\begin{split}
0 &= \int_{\Omega} \sum_{\alpha,\beta, i, j} \mathbf{A}_{\alpha i \beta j}\mathrm{D}_{i}u^{\varepsilon}_{\alpha}\mathrm{D}_{j}u^\varepsilon_{\beta} +\varepsilon \int_{\Omega} \lvert \mathrm{D}u^\varepsilon \rvert ^2  
\\
&\qquad\quad - \int_{\Omega}  \sum_{\alpha, \beta, i} \mathbf{B}_{\alpha \beta i}\mathrm{D}_{i}u^{\varepsilon}_{\beta}u^\varepsilon_{\alpha} - \int_{\Omega}  \sum_{\alpha, \beta} \mathbf{C}_{\alpha \beta}u^{\varepsilon}_{\beta}u^\varepsilon_{\alpha} + \int_{\Omega} \sum_{\gamma} f_{\gamma}u^\varepsilon_{\gamma} \nonumber 
\\ 
&\geq \nu \int_{\Omega} \lvert \mathrm{D}u^\varepsilon \rvert^2 - \int_{\Omega} \sum_{\gamma} f_{\gamma}(\Sigma u^\varepsilon)_{\gamma} \nonumber
 \\
  &\geq \nu \int_{\Omega} \lvert \Pi \mathrm{D}u^\varepsilon \rvert^2 -\frac{1}{4 \delta} \int_{\Omega} \lvert f \rvert^2 - \delta \int_{\Omega} \lvert \Sigma u^\varepsilon \rvert^2. \nonumber
\end{split}
\]
Therefore, we have
\begin{equation} \label{eq:prop1}
\nu  \| \Pi \mathrm{D}u^\varepsilon \| ^2_{L^2(\Omega)}  \leq \frac{1}{4 \delta } \| f \| ^2_{L^2(\Omega)} +   \de \| \Sigma u^\varepsilon \| ^2_{L^2(\Omega)}.
\end{equation}
We now select the parameter $\de>0$ by setting
\[
\delta := \frac{\nu}{2C}, 
\]
where $C$ is the constant appearing in the degenerate Poincar\'e inequality (Proposition \ref{prop:1}). As a result, inequality \eqref{eq:prop1} implies
\[
\begin{split}
\frac{1}{4 \delta} \| f \| ^2_{L^2(\Omega)} &\geq \nu \| \Pi \mathrm{D}u^\varepsilon \| ^2_{L^2(\Omega)} - \delta \| \Sigma u^\varepsilon \| ^2_{L^2(\Omega)} \\ &\geq \nu \| \Pi \mathrm{D}u^\varepsilon \| ^2_{L^2(\Omega)} - C\delta \| \Pi \mathrm{D}u^\varepsilon \| ^2_{L^2(\Omega)} \nonumber \\ &= \nu \| \Pi \mathrm{D}u^\varepsilon \| ^2_{L^2(\Omega)} - \frac{\nu}{2} \| \Pi \mathrm{D}u^\varepsilon \| ^2_{L^2(\Omega)} \nonumber \\ &= \frac{\nu}{2} \| \Pi \mathrm{D}u^\varepsilon \| ^2_{L^2(\Omega)} \nonumber.
\end{split}
\]
Hence, we have the uniform estimate
\begin{equation} \label{eq:prop2}
 \| \Pi \mathrm{D}u^\varepsilon \| ^2_{L^2(\Omega)} \leq \frac{C}{\nu^2}\| f \| ^2_{L^2(\Omega)} .
\end{equation}
Equation \eqref{eq:prop2} shows that $\Pi \mathrm{D}u^\varepsilon$ is uniformly bounded in $L^2(\Omega;\Pi)$. Further, again by Proposition \ref{prop:1}, $\Sigma u^\varepsilon$ is also uniformly bounded in $L^2(\Omega;\Si)$. By standard weak compactness properties, there exists a sequence of indices $(\e_k)_1^\infty$ and mappings $u \in L^2(\Omega, \Sigma)$ and $U \in L^2(\Omega, \Pi)$ such that 
\begin{equation} \label{eq:20B}
\left\{ \ \ 
\begin{split}
&\Sigma u^{\varepsilon} \weak u, \hspace{0.2cm} & \text{in $L^2(\Omega, \Sigma)$}, 
\\
&\Pi \mathrm{D}u^{\varepsilon} \weak U, \hspace{0.2cm} & \text{in $L^2(\Omega, \Pi)$}
\end{split}
\right.
\end{equation}
as $k \rightarrow \infty$, along the sequence $\varepsilon_k \rightarrow 0$. 
\end{proofpart}

\begin{proofpart}
We now show that the mapping $u \in L^2(\Omega, \Sigma)$ obtained in the previous step is actually an adapted distributional solution to the Dirichlet problem \eqref{eq:1}. It is perhaps worth noting that, due to the degenerate nature of the problem, we cannot merely use the Closed Graph theorem to ascertain ``$U=\D u$", as we only have control of the projection of $u$ on the subspace  $\Si \sub \R^N$, and of $\D u$ only on the subspace $\Pi \sub \R^N$. Recall that, for any $\varepsilon \in (0,1)$, $u^{\varepsilon} \in W^{1,2}_0(\Om,\R^N)$ is a weak solution to \eqref{eq:1s}. Fix a test mapping $\phi \in C^\infty_c(\Om,\Si)$. Then, we have
\[
\int_\Om \bigg\{\sum_{\al, \beta, i, j} \mathbf{A}^\e_{\alpha i \beta j} \mathrm{D}_{i}u^{\varepsilon}_{\beta}\D_i \phi_\al - \sum_{\al,\beta, i} \mathbf{B}_{\alpha \beta i}\mathrm{D}_{i}u^{\varepsilon}_{\beta} \phi_\al - \sum_{\al, \beta} \mathbf{C}_{\alpha \beta}u^{\varepsilon}_{\beta}\phi_\al +\sum_{\al}f_{\alpha}\phi_\al \bigg\}=0.
\]
By recalling our notation
\[
\mathbf{A}^\e_{\alpha i \beta j} =\mathbf{A}_{\alpha i \beta j} +\e \de_{\al\be}\de_{ij},
\]
an integration by parts gives
\[
\begin{split}
\int_\Om \bigg\{\sum_{\al, \beta, i, j} \mathbf{A}_{\alpha i \beta j} \D^2_{ij} \phi_\al u^{\varepsilon}_{\beta} -  \sum_{\al,\beta, i} \mathbf{B}_{\alpha \beta i} \D_i \phi_\al u^{\varepsilon}_{\beta}  + & \sum_{\al, \beta} \mathbf{C}_{\alpha \beta} \phi_\al  u^{\varepsilon}_{\beta} \bigg\}
\\
&= \int_\Om \sum_{\al} \Big\{ f_{\alpha}\phi_\al -\e  u^{\varepsilon}_{\al}\D^2_{ii} \phi_\al \Big\}.
\end{split}
\]
Note now that $\Sigma \phi = \phi$, and that by  \eqref{eq:0}, \eqref{as:2}, \eqref{as:3} and \eqref{eq:4}, we have 
\[
\left\{ 
 \ \ 
 \begin{split}
\sum_{\al, i, j}  \mathbf{A}_{\alpha i \beta j} \mathrm{D}^2_{ij} \phi_{\al} &= \sum_{\al, \la, i, j} \mathbf{A}_{\al i \la j} \mathrm{D}^2_{ij} \phi_{\al} \Sigma_{\lambda \be},
\\
\sum_{\al, i}  \mathbf{B}_{\alpha \beta i}\mathrm{D}_i \phi_{\al}& = \sum_{\al, \lambda, i}  \mathbf{B}_{\al \la i}\mathrm{D}_i \phi_{\al} \Sigma_{\lambda \be},
\\
\sum_{\al}  \mathbf{C}_{\alpha \beta}\phi_\al &= \sum_{\al, \lambda}  \mathbf{C}_{\al \lambda}\phi_\al \Sigma_{\lambda \be}.
\end{split}
\right.
\]
These identities imply
\[
\begin{split}
\int_{\Omega} \sum_{\kappa, \alpha, \beta, i, j} & (\mathbf{A}_{\alpha i \beta j} \mathrm{D}^2_{ij}\phi_{\alpha})(\Sigma_{\beta \kappa}u^{\varepsilon}_{\kappa}) - \int_{\Omega} \sum_{\kappa, \alpha, \beta, i} (\mathbf{B}_{\alpha \beta i} \mathrm{D}_{i}\phi_{\alpha})(\Sigma_{\beta \kappa}u^{\varepsilon}_{\kappa}) 
\\
&+  \int_{\Omega} \sum_{\kappa, \alpha, \beta} (\mathbf{C}_{\alpha \beta}\phi_{\alpha})(\Sigma_{\beta \kappa}u^{\varepsilon}_{\kappa}) = \int_{\Omega} \sum_{\gamma} f_{\gamma} \phi_{\gamma} - \varepsilon \int_{\Omega} \sum_{\lambda, \kappa} \Delta \phi_{\lambda}(\Sigma_{\lambda \kappa} u^{\varepsilon}_{\kappa}).
\end{split}
\]
By letting $\varepsilon = \varepsilon_k \rightarrow 0$, the convergences of \eqref{eq:20B} imply that $u \in L^2(\Omega, \Sigma)$ and that it is an adapted distributional solution to the PDE system
\begin{equation*} 
\sum_{\beta, i, j} \mathbf{A}_{\alpha i \beta j}\mathrm{D}_{ij}^{2}u_{\beta} + \sum_{\beta, i} \mathbf{B}_{\alpha \beta i}\mathrm{D}_{i}u_{\beta} + \sum_{\beta} \mathbf{C}_{\alpha \beta}u_{\beta} = f_{\alpha} \ \text{ in }\Om. 
\end{equation*}
Further, Proposition \ref{prop:2} implies the existence of a degenerate partial trace operator on $\p\Om$. Since $u^\e \in W^{1,2}_0(\Om,\R^N)$ for any $\e \in(0,1)$, this implies that $u$ is measurable with respect to the $(n-1)$-Hausdorff measure and well defined up to nullsets on the boundary of any strictly convex open subset of $\Om$. Additionally, $u=0$ $\mH^{n-1}$-a.e.\ on $\p\Om$. Hence, the boundary condition is satisfied as well.
\end{proofpart}

\begin{proofpart} Now we discuss the partial regularity property of the adapted distributional solution $u$, constructed in the previous step. Recall that by \eqref{eq:20B}, we have
\[
\left\{ \ \ 
\begin{split}
&\Sigma u^{\varepsilon} \weak u, \hspace{0.2cm} & \text{in $L^2(\Omega, \Sigma)$}, 
\\
&\Pi \mathrm{D}u^{\varepsilon} \weak U, \hspace{0.2cm} & \text{in $L^2(\Omega, \Pi)$},
\end{split}
\right.
\]
as $\e \to 0$, along an infinitesimal sequence $(\e_k)_1^\infty$. Recalling that by assumption $\Pi$ is spanned by rank-one matrices, we may select $\eta \ot a \in \Pi \setminus \{0\}$. Then, the above implies
\[
\left\{ \ \ 
\begin{split}
&\eta \cdot u^{\varepsilon} \weak \eta \cdot u, \hspace{0.2cm} & \text{in $L^2(\Omega, \Sigma)$}, 
\\
&\mathrm{D}_a (\eta \cdot u^{\varepsilon}) \weak (\eta \ot a) : U, \hspace{0.2cm} & \text{in $L^2(\Omega, \Pi)$},
\end{split}
\right.
\]
as $\e \to 0$ along the infinitesimal sequence  $(\e_k)_1^\infty$.  By passing to the limit as $k\to\infty$, the Closed Graph theorem then implies that the $a$-directional derivative $\mathrm{D}_a (\eta \cdot u)$ of the projection $\eta \cdot u$ exists in $L^2(\Om)$, and can be represented through contractions of the mapping $U \in L^2(\Omega, \Pi)$ with rank-one directions $\eta \ot a \in \Pi$:
\[
\mathrm{D}_a (\eta \cdot u) = (\eta \ot a) : U \, \in \, L^2(\Om).
\]
\end{proofpart}

\begin{proofpart}
We finally demonstrate uniqueness of the adapted distributional solution to the problem, which in particular implies independence of the vanishing viscosity approximation method it was constructed. For the sake of contradiction, suppose the problem has two adapted distributional solutions, $u$ and $v$ in $\mathscr{D}'(\Omega, \Sigma)$. By the previous, we also have that  $u,v \in L^2(\Om,\Si)$. Then, $w \coloneqq u - v \in L^{2}(\Omega, \Sigma)$ is also an adapted distributional  solution to
\[
\left\{ \ \
\begin{split}
&\sum_{\beta = 1}^{N}\sum_{i, j = 1}^{n} \mathbf{A}_{\alpha i \beta j}\mathrm{D}_{ij}^{2}w_{\beta} + \sum_{\beta = 1}^{N} \sum_{i=1}^{n} \mathbf{B}_{\alpha \beta i}\mathrm{D}_{i}w_{\beta} + \sum_{\beta = 1}^{N} \mathbf{C}_{\alpha \beta}w_{\beta} = 0,  & \text{ in $\Omega$}, 
\\ 
&w = 0, & \text{ on $\partial \Omega$},
\end{split}
\right.
\]
in $\mathscr{D}'(\Omega, \Sigma)$. Let $w$ be extended by zero (on $\mathbb{R}^{n} \setminus \Omega$) to a function in $L^{2}(\mathbb{R}^{n} , \Sigma)$, also denoted by $w$. Let $(\eta^\e)_{\e>0}$ be the standard mollifier (as e.g.\ in \cite{EG}). For any $\varepsilon > 0$, the mollified solution ${w_{\varepsilon}} := w \ast \eta^{\varepsilon}$ satisfies that ${w_{\varepsilon}} \in C^{\infty}_{c}(\mathbb{R}^{n}, \Sigma)$ and also
\[
\sum_{\beta = 1}^{N}\sum_{i, j = 1}^{n} \mathbf{A}_{\alpha i \beta j}\mathrm{D}_{ij}^{2}({w_{\varepsilon}})_{\beta} + \sum_{\beta = 1}^{N} \sum_{i=1}^{n} \mathbf{B}_{\alpha \beta i}\mathrm{D}_{i}({w_{\varepsilon}})_{\beta} + \sum_{\beta = 1}^{N} \mathbf{C}_{\alpha \beta}({w_{\varepsilon}})_{\beta} = 0 \ \text{ in $\mathbb{R}^n.$}
\]
Snce $w_\e$ is compactly supported, integration by parts and the application of inequality \eqref{eq:04} leads to
\[
\begin{split}
\| \Pi \mathrm{D} {w_{\varepsilon}} \| _{L^{2}(\mathbb{R}^{n})}^{2} 
&\leq 
\frac{1}{\nu} \int_{\mathbb{R}^n}\biggl\{\sum_{\alpha, i, \beta, j} \mathbf{A}_{\alpha i \beta j}\mathrm{D}_{i}({w_{\varepsilon}})_{\alpha}\mathrm{D}_{j}({w_{\varepsilon}})_{\beta} \bigg\}
\\
&\leq \int_{\mathbb{R}^n}\biggl\{ \sum_{\alpha, \beta, i} \mathbf{B}_{\alpha \beta i}\mathrm{D}_{i}({w_{\varepsilon}})_{\beta}({w_{\varepsilon}})_{\alpha}  + \sum_{\alpha, \beta} \mathbf{C}_{\alpha \beta}({w_{\varepsilon}})_{\beta}({w_{\varepsilon}})_{\alpha} \biggl\}.
\end{split}
\]
By using \eqref{as:2} and Lemma \ref{lem:1}, this further leads to
\[
\| \Pi \mathrm{D} {w_{\varepsilon}} \| _{L^{2}(\mathbb{R}^{n})}^{2} \leq 0.
\]
Therefore, we have
\[
\| \Pi \mathrm{D} {w_{\varepsilon}} \| _{L^{2}(\mathbb{R}^{n})}^{2} = 0.
\]
Further, from the degenerate partial Poincar\'e inequality given in Proposition \ref{prop:1}, we have

\begin{equation*}
0 \leq \| \Sigma {w_{\varepsilon}} \| _{L^{2}(\mathbb{R}^{n})} \leq C\| \Pi \mathrm{D}{w_{\varepsilon}} \| _{L^{2}(\mathbb{R}^{n})} = 0.
\end{equation*}
Hence we obtain
\[
\| \Sigma {w_{\varepsilon}} \| _{L^{2}(\mathbb{R}^{n})} = 0.
\]
Since  $w$ is valued into $\Sigma \sub \R^N$, it follows that $w^\varepsilon$ is also valued into $\Sigma$. Therefore, ${w_{\varepsilon}} = \Sigma  {w_{\varepsilon}} = 0$ on $\mathbb{R}^{n}$. By letting $\varepsilon \rightarrow 0$, we obtain that $w \equiv 0$, yileding that $u\equiv v$. Hence, the adapted distriburtional solution is unique.  
\end{proofpart}

\noi The proof of the theorem is now complete. \qed

\medskip

\noi {\bf Declarations.} The authors have no conflict of interest to declare. Further, no data were generated or in any way involved in this work. Finally, no AI assistance has been utilised in the development of this work.

\medskip

\end{document}